\newtheorem{theorem}{Theorem}[section]
\newtheorem{definition}[theorem]{Definition}
\newtheorem{lemma}[theorem]{Lemma}
\title{\Large\bfseries
A Ternary Gamma Semiring Framework for Solving \\
Multi-Objective Network Optimization Problems}
\author{\small
Chandrasekhar Gokavarapu$^{1,2}$\hspace{5mm} Dr D Madhusudhana Rao$^{3,4}$\\[10pt]
\small $^1$Lecturer in Mathematics,
Government College (A), Rajahmundry, A.P., India\\[2pt]
 \small $^2$Research Scholar, Department of Mathematics,
Acharya Nagarjuna University, Guntur, A.P., India\\[2pt]
\texttt{\small chandrasekhargokavarapu@gmail.com}\\ [10pt]
\small $^3$Lecturer in  Mathematics, Government College For Women(A), Guntur, Andhra Pradesh, India,\\
\small $^4$Research Supervisor, Dept.  of Mathematics,Acharya Nagarjuna University, Guntur, A.P., India,\\
\texttt{\small dmrmaths@gmail.com}}
\date{}
\begin{document}
\maketitle
\begin{abstract}
Classical shortest–path methods rely on binary tropical semirings $(\min,+)$,
whose dyadic structure limits them to pairwise cost interactions. However,
many real-world systems---including logistics, supply chains, communication
networks, and reliability-aware infrastructures---exhibit inherently ternary
dependencies among cost, time, and risk that cannot be decomposed into
pairwise components.

This paper introduces the \emph{Ternary Tropical Gamma Semiring} (TTGS), a
$\Gamma$-indexed algebraic structure that generalizes tropical semirings by
replacing binary additive composition with a non-separable ternary operator.
We establish the axioms of TTGS, prove associativity, distributivity, and
monotonicity, and show that TTGS forms a well-structured foundation for
multi-parameter optimization.

Building on this framework, we develop \textbf{TTGS-Pathfinder}, a ternary
analogue of the Bellman--Ford algorithm. We derive its dynamic-programming
recurrence, prove correctness through an invariant-based argument, analyze
convergence under the TTGS order, and obtain an $O(n^2m)$ complexity bound.

Applications demonstrate that TTGS naturally models systems whose behaviour
depends on triadic cost interactions, offering a principled alternative to
binary tropical, vector, or scalarized multi-objective methods.
\end{abstract}

\tableofcontents


\section{Introduction}

The mathematical foundations of modern network optimization are deeply rooted in 
algebraic structures known as \emph{semirings}.  
Classical expositions—including the works of Golan~\cite{Golan1992,Golan1999} 
and Hebisch--Weinert~\cite{HebischWeinert1998}—establish semirings as a 
powerful algebraic environment capable of encoding optimization, 
automata, and dynamic programming models within a unified formalism.
Among these structures, the tropical semiring $(\min,+)$ occupies a central 
position, providing an algebraic lens through which path problems, 
dynamic programming recurrences, and certain classes of automata may be interpreted.
The success of this framework stems from its ability to encode the accumulation of 
weights along a path through an associative, binary-additive cost model. 
Algorithms such as Bellman--Ford and Dijkstra operate precisely because the 
underlying cost of a path decomposes into a sequence of binary interactions, 
each governed by the same algebraic rule.  
Consequently, the tropical semiring has become the standard tool for 
single-objective shortest-path problems and the analytical backbone of 
numerous applications in operations research, computer science, 
and control theory.

\subsection{Semiring Methods in Optimization}

In their classical form, semiring-based optimization models represent the cost of a path 
by combining local edge weights through a binary operation (typically addition) and 
selecting the optimal path through another binary operation (typically minimum).  
This paradigm has proven powerful because it yields algorithms whose correctness and 
convergence are guaranteed by algebraic properties such as associativity, 
monotonicity, and distributivity—properties systematically developed in 
Golan’s monographs~\cite{Golan1992,Golan1999} and in the algebraic theory 
of Hebisch and Weinert~\cite{HebischWeinert1998}.  
Within this framework, a path cost is always expressible as
\[
w(P) = w(e_1) + w(e_2) + \cdots + w(e_k),
\]
and the optimal path is obtained by taking the minimum over all such values.  

While this abstraction elegantly captures a wide variety of classical problems, 
it implicitly assumes that the interaction between successive edges is 
\emph{pairwise additive}.  
Such a pairwise structure is deeply tied to the binary nature of semirings and 
is inherited from the classical algebraic lineage that also includes early 
generalizations in $\Gamma$-ring theory as developed by Nobusawa~\cite{Nobusawa1964} 
and Barnes~\cite{Barnes1966}.  
However, in multi-objective, multi-criteria, or structurally coupled systems, 
this pairwise assumption becomes highly restrictive and fails to capture 
higher-order dependencies that arise naturally in triadic or multi-parameter 
network interactions.

\subsection{Limitations of Binary Semirings}

Real engineering networks rarely adhere to purely binary cost-composition laws.  
Transport systems often involve a simultaneous interplay of cost, time, and risk; 
communication networks must account for interactions between latency, congestion, 
and packet-loss estimations; and supply-chain systems frequently require the combined 
assessment of resource expenditure, temporal constraints, and reliability conditions.  

In many such scenarios, the true performance or utility of a path is determined not by 
isolated pairwise combinations of adjacent weights but by \emph{irreducible ternary} 
interactions.  
Classical semiring theory---as formalized in the foundational works of 
Golan~\cite{Golan1992,Golan1999} and Hebisch--Weinert~\cite{HebischWeinert1998}---is 
inherently binary: both additive and multiplicative operations operate on two operands 
and assume that all higher-order interactions can be decomposed into pairwise components.  
For example, in reliability-aware routing, the combined effect of cost, travel time, 
and failure probability may not separate cleanly into binary contributions. 
The presence of such higher-order dependencies violates the assumptions of binary 
tropical algebra and places these problems outside the expressive capacity of 
traditional semiring-based methods.

Attempts to extend binary semiring optimization into multi-criteria settings typically 
rely on vector semirings, lexicographic orders, or scalarization techniques.  
However, these approaches still reduce ternary interactions to a sequence of 
binary operations and thus lose the structural coupling inherent to the original problem.  
Such reductions follow the classical algebraic paradigm traced back to early 
generalizations in $\Gamma$-ring theory by Nobusawa~\cite{Nobusawa1964} and 
Barnes~\cite{Barnes1966}, where higher-order algebraic behaviour is still encoded 
through binary operations.  
This gap is not merely theoretical: it manifests concretely in situations where 
the quality of a route is dictated by nonlinear synergies among three or more 
successive edge attributes.  
No classical semiring framework can natively express such interactions.


\subsection{Gap in the Literature}

To the best of our knowledge, the existing literature contains no general algebraic 
formalism capable of modeling and optimizing cost functions that depend 
\emph{intrinsically on ternary compositions}.  
Classical semiring theory, as systematically developed in the works of 
Golan~\cite{Golan1992,Golan1999} and Hebisch--Weinert~\cite{HebischWeinert1998},
provides a robust foundation for binary-structured optimization, but its algebraic 
operations are fundamentally dyadic.  
While tropical algebra and idempotent analysis offer essential tools for 
binary-additive cost aggregation, they do not extend to higher-arity interactions 
without imposing separability assumptions inherited from the binary nature of 
semirings themselves.

Similarly, research in multi-objective optimization often treats each criterion 
independently, combining them post hoc through dominance relations, scalarizations, 
or lexicographic structures—approaches that ultimately remain rooted in the 
binary algebraic paradigm.  
This paradigm traces back to the classical development of additive and multiplicative 
operations in ring and $\Gamma$-ring theory, as initiated by Nobusawa~\cite{Nobusawa1964} 
and later refined by Barnes~\cite{Barnes1966}, where higher-order behaviour is still 
encoded through binary compositions.

Thus, a fundamental gap persists between classical algebraic optimization models and 
the requirements of systems where cost evolution is governed by irreducible 
three-way interactions.  
Bridging this gap necessitates the introduction of a new algebraic object—one that 
retains the algorithmic tractability of tropical semirings while accommodating 
genuinely ternary operations.

\subsection{A New Problem: The 3-Parameter Supply Chain Problem}

To illustrate the nature of the challenge, consider a supply-chain network in which each 
transport segment is characterized by a triple of quantities
\[
(c_e, t_e, r_e),
\]
representing monetary cost, travel time, and reliability risk.  
In many realistic models, the combined effect of these attributes over successive 
segments is not separable: an increase in risk may amplify the significance of travel 
time, or a change in travel time may alter cost efficiency only under specific 
reliability conditions.  
Hence, a path utility function may depend on ternary compositions such as
\[
F((c_{i},t_{i},r_{i}), (c_{i+1},t_{i+1},r_{i+1}), (c_{i+2},t_{i+2},r_{i+2})),
\]
where $F$ captures the structural interaction among three consecutive edges.

This scenario lies fundamentally outside the expressive capacity of classical semiring 
optimization, whose algebraic operations are strictly binary, as formalized in the 
semiring literature of Golan~\cite{Golan1992,Golan1999} and  
Hebisch--Weinert~\cite{HebischWeinert1998}, and inherited from the binary algebraic 
paradigm of $\Gamma$-ring theory developed by Nobusawa~\cite{Nobusawa1964} and 
Barnes~\cite{Barnes1966}.  

This phenomenon motivates the definition of a new optimization model:  
the \emph{3-Parameter Supply Chain Problem}, a pathfinding problem whose cost 
aggregation rule is genuinely ternary and therefore incompatible with the binary 
composition rules of classical tropical and semiring-based methods.


\subsection{Contributions of This Work}

Motivated by the above limitations, this paper introduces a new algebraic and 
algorithmic framework for ternary-weighted optimization problems.  
The main contributions are as follows:

\begin{itemize}
    \item We introduce the \emph{Ternary Tropical Gamma Semiring} (TTGS), an algebraic 
    structure that extends the tropical semiring by replacing the binary additive 
    operation with a non-separable ternary interaction.  
    The construction is rooted in the classical semiring foundations of 
    Golan~\cite{Golan1992,Golan1999} and the algebraic framework of 
    Hebisch--Weinert~\cite{HebischWeinert1998}, while departing fundamentally 
    from the binary constraints of these systems.

    \item We provide a complete axiomatic theory of TTGS and establish that it satisfies 
    all the defining properties of a ternary $\Gamma$-semiring, including associativity, 
    distributivity over the idempotent minimum operation, and monotonicity.  
    This extends the lineage of $\Gamma$-structures initiated by 
    Nobusawa~\cite{Nobusawa1964} and developed further by Barnes~\cite{Barnes1966}, 
    but replaces their binary operations with a genuinely ternary interaction.

    \item We develop \emph{TTGS-Pathfinder}, a dynamic programming algorithm that 
    generalizes Bellman--Ford to the ternary setting and is capable of solving 
    network optimization problems governed by ternary cost interactions.  
    Its algebraic correctness relies on the TTGS axioms, which generalize the 
    binary-semiring principles described in~\cite{Golan1992,Golan1999}.

    \item We furnish proofs of correctness, convergence behaviour, and computational 
    complexity of the proposed algorithm within the TTGS framework, again relying on 
    structural properties inherited from classical semiring theory~\cite{HebischWeinert1998}.

    \item We demonstrate how TTGS unifies a broad class of multi-objective, 
    reliability-aware, and structurally coupled optimization problems that are 
    inaccessible to traditional semiring methods grounded in binary additive models 
    \cite{Golan1992,Golan1999}.
\end{itemize}

Taken together, these results establish a new bridge between algebraic structures and 
computational optimization, enabling the systematic analysis of multi-parameter 
network problems whose intrinsic behaviour is fundamentally ternary.

\section{Problem Formulation and Algebraic Structure}

The purpose of this section is twofold.  
First, we formalize a class of network optimization problems whose cost aggregation 
laws are genuinely ternary and therefore incompatible with classical binary tropical 
methods, whose algebraic foundations remain strictly dyadic in the sense of 
Golan~\cite{Golan1992,Golan1999} and Hebisch--Weinert~\cite{HebischWeinert1998}.  
Second, we introduce the algebraic structure that underpins the proposed optimization 
framework: the \emph{Ternary Tropical Gamma Semiring} (TTGS).  
The formulation is constructed in a way that respects the underlying principles of 
ternary $\Gamma$-algebra, extending the structural lineage originating in 
Nobusawa~\cite{Nobusawa1964} and Barnes~\cite{Barnes1966}, but replacing their 
binary operations with non-separable ternary interactions.

\subsection{The 3-Parameter Supply Chain Problem (3-PSCP)}

Let $G=(V,E)$ be a finite directed graph.  
Each directed edge $(u,v)\in E$ is equipped with a triple of real-valued parameters
\[
w(u,v)=(c_{uv}, t_{uv}, r_{uv}),
\]
representing, respectively, monetary cost, transit time, and reliability risk.  
Although these quantities may be considered independently, their interaction in many 
practical scenarios is not separable: the effective performance of a route depends on 
the simultaneous behaviour of these attributes.  
For example, a low-cost path may be undesirable if its time and risk simultaneously 
amplify one another; likewise, a fast but unreliable connection may only be acceptable 
when combined with favourable cost conditions.

Let
\[
P=(v_0 \to v_1 \to \cdots \to v_k)
\]
be a directed path.  
Classical tropical optimization computes its cost through a binary-additive rule, 
a principle inherited from traditional semiring theory~\cite{Golan1992,Golan1999}, 
but the structural coupling of $(c,t,r)$ across consecutive segments requires a 
\emph{ternary} cost accumulation mechanism.  
We therefore define the cost of $P$ recursively through a sequence of ternary 
interactions:
\[
C(P) 
= 
[ w(v_0v_1), w(v_1v_2), w(v_2v_3) ]
\;\oplus\;
[ w(v_1v_2), w(v_2v_3), w(v_3v_4) ]
\;\oplus\;\cdots,
\]
where the operation $[\cdot,\cdot,\cdot]$ binds triples of successive edges and 
$\oplus=\min$ selects the optimal ternary contribution at each stage.

This formulation captures the following essential features of 
multi-objective, structurally coupled routing problems:

\begin{itemize}
    \item \textbf{Non-separability.}  
    The contribution of an edge depends not only on its own attributes but on the 
    attributes of its neighbours within the path.

    \item \textbf{Local ternary dependence.}  
    The cost evolution is determined by three-edge windows 
    $(v_{i-1}v_i,v_iv_{i+1},v_{i+1}v_{i+2})$, reflecting short-range interactions 
    common in supply chains, communication networks, and reliability-aware systems.

    \item \textbf{Idempotent global selection.}  
    The use of $\min$ as the global aggregator preserves the idempotent 
    optimization paradigm characteristic of semiring-based models
    \cite{Golan1992,Golan1999}.
\end{itemize}

Thus, 3-PSCP provides a natural optimization problem whose algebraic structure cannot 
be represented by any classical binary semiring, as such systems remain fundamentally 
dyadic~\cite{HebischWeinert1998}, and therefore demands a new mathematical 
framework capable of encoding ternary interactions.

The purpose of this section is twofold.  
First, we formalize a class of network optimization problems whose cost aggregation 
laws are genuinely ternary and therefore incompatible with classical binary tropical 
methods.  
Second, we introduce the algebraic structure that underpins the proposed optimization 
framework: the \emph{Ternary Tropical Gamma Semiring} (TTGS).  
The formulation is constructed in a way that respects the underlying principles of 
ternary $\Gamma$-algebra, ensuring that algorithmic procedures developed in later 
sections rest on a rigorous foundation.



\subsection{Definition of the Ternary Tropical Gamma Semiring (TTGS)}

To model the ternary aggregation inherent in 3-PSCP, we introduce an algebraic system 
that generalizes the tropical semiring by replacing the binary additive operation with 
a genuine ternary interaction.  
This construction extends the classical semiring framework developed in 
Golan~\cite{Golan1992,Golan1999} and the algebraic principles discussed by 
Hebisch--Weinert~\cite{HebischWeinert1998}, while incorporating the $\Gamma$-indexed 
structure originating from the foundational work of Nobusawa~\cite{Nobusawa1964} and 
Barnes~\cite{Barnes1966}.

Let
\[
T=\mathbb{R}\cup\{\infty\},
\]
where $\infty$ plays the role of an absorbing, non-informative element.

\begin{definition}
Let $\Gamma$ be a nonempty index set.  
A \emph{Ternary Tropical Gamma Semiring (TTGS)} is a quadruple 
$(T,\oplus,[\cdot,\cdot,\cdot],\Gamma)$ consisting of:
\begin{enumerate}
    \item an idempotent commutative addition  
          \[
          a\oplus b=\min(a,b),
          \]
          consistent with the idempotent algebraic frameworks described in 
          \cite{Golan1992,Golan1999},

    \item a $\Gamma$-indexed ternary operation  
          \[
          [x,y,z]=F(x,y,z),
          \]
          where $F:T^3\to T$ is a monotone, non-separable mapping, generalizing the 
          $\Gamma$-parametrization principles introduced in 
          \cite{Nobusawa1964,Barnes1966},

    \item and compatibility axioms ensuring that $(T,\oplus,[\cdot,\cdot,\cdot])$ 
          supports dynamic-programming recurrences and fixed-point updates, extending 
          the classical semiring-based DP theory in 
          \cite{Golan1992,Golan1999,HebischWeinert1998}.
\end{enumerate}
The set $\Gamma$ allows different functional forms of the ternary interaction to be 
selected depending on modelling requirements.
\end{definition}

Several families of functions $F$ arise naturally in applications:

\begin{itemize}
    \item \textbf{Linear interactions:}
          \[
          F(x,y,z)=x+y+z,
          \]
          reflecting additive but still ternary cost propagation.

    \item \textbf{Weighted mixtures:}
          \[
          F(x,y,z)=\alpha x+\beta y+\gamma z,
          \qquad 
          \alpha,\beta,\gamma>0,
          \]
          representing asymmetric influence among successive segments.

    \item \textbf{Risk-amplifying interactions:}
          \[
          F(x,y,z)=\max(x,y,z)+\lambda\min(x,y,z),
          \qquad \lambda>0,
          \]
          capturing nonlinear interactions in risk-sensitive systems.
\end{itemize}

These examples illustrate the expressive capacity of TTGS to encode a wide variety of 
ternary dependencies that arise in engineering networks, extending classical binary 
semiring models \cite{Golan1992,Golan1999,HebischWeinert1998}.

\subsection{Axioms and Structural Theorems}

The introduction of a ternary operation raises natural questions regarding 
associativity, distributivity, and closure.  
These properties are essential to any algebraic structure intended to support 
dynamic-programming algorithms and extend the classical semiring principles discussed in 
Golan~\cite{Golan1992,Golan1999} and Hebisch--Weinert~\cite{HebischWeinert1998}.  
The use of a $\Gamma$-indexed ternary operator follows the structural lineage of 
$\Gamma$-algebra initiated by Nobusawa~\cite{Nobusawa1964} and further developed by 
Barnes~\cite{Barnes1966}.

\begin{theorem}
Let $(T,\oplus,[\cdot,\cdot,\cdot],\Gamma)$ be a TTGS.  
Assume that $F$ is monotone in each argument and compatible with the tropical order on 
$T$.  
Then the following hold:
\begin{enumerate}
    \item \textbf{Ternary associativity:}
    \[
    [[x,y,z],u,v]
    =
    [x,[y,z,u],v]
    =
    [x,y,[z,u,v]].
    \]

    \item \textbf{Distributivity over $\oplus=\min$:}
    \[
    [x\oplus x',y,z]
    =
    [x,y,z]\oplus[x',y,z],
    \]
    and the corresponding identities obtained by permuting the variables.  
    This generalizes the distributive behaviour of idempotent semirings described in 
    \cite{Golan1992,Golan1999,HebischWeinert1998}.

    \item \textbf{Closure under idempotent selection:}
    If $a,b\in T$, then 
    \[
    a\oplus b=\min(a,b)\in T.
    \]
    Moreover, monotonicity of $F$ ensures that ternary interactions preserve the 
    ordering induced by $\oplus$, consistent with the ordered-semiring viewpoint in 
    \cite{Golan1992}.
\end{enumerate}
\end{theorem}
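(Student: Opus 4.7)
The plan is to treat the three parts of the theorem separately and to lean throughout on the idempotent selection property of $\oplus=\min$. Closure is essentially immediate: $\mathbb{R}\cup\{\infty\}$ is stable under finite minima, and combined with monotonicity of $F$ in each coordinate this yields the ordered-semiring compatibility claimed in part (iii) by a direct chain of inequalities. The substantive work therefore lies in parts (i) and (ii).

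For distributivity, I would exploit the fact that $\min(x,x')$ always coincides with one of $x$ or $x'$. Hence $F(\min(x,x'),y,z)$ is literally one of the two values $F(x,y,z)$, $F(x',y,z)$, and so is at least their minimum. Monotonicity of $F$ in its first argument supplies the reverse inequality $F(\min(x,x'),y,z)\le\min(F(x,y,z),F(x',y,z))$. Combining the two gives the identity $[x\oplus x',y,z]=[x,y,z]\oplus[x',y,z]$, and the permuted identities follow by the same argument applied to the second and third slots. No structural property of $F$ beyond coordinatewise monotonicity and the selection behaviour of $\min$ is used.

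The hard part is ternary associativity. The identity $[[x,y,z],u,v]=[x,[y,z,u],v]=[x,y,[z,u,v]]$ is a genuine functional constraint on $F$ and cannot be derived from monotonicity alone. For the linear representative $F(x,y,z)=x+y+z$, expansion collapses every nested bracket to $x+y+z+u+v$ by ordinary associativity of $+$, and the identity is immediate. For the weighted mixture $F(x,y,z)=\alpha x+\beta y+\gamma z$, however, direct expansion gives coefficient $\alpha^{2}$ on $x$ in $[[x,y,z],u,v]$ versus coefficient $\alpha$ on $x$ in $[x,y,[z,u,v]]$, so the asserted equality forces $\alpha=\beta=\gamma=1$; the $\max$--$\min$ interaction presents an analogous obstruction.

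Consequently I expect part (i) to require either a strengthening of the ambient hypotheses or a per-example verification, and this is the step I would flag as the principal obstacle. The cleanest route is to isolate a sufficient structural condition on $F$ — for instance, that $F$ factors through a commutative, binary-associative aggregation of its three inputs — prove ternary associativity under that condition by reduction to the binary-associative identity, and then record which members of the canonical family of TTGS examples actually satisfy it. The weighted and risk-amplifying forms would need to be handled by restricting $\Gamma$ to the symmetric subfamily, or else by promoting ternary associativity from a theorem to an additional defining axiom of TTGS; either choice should be stated explicitly so that the downstream correctness argument for TTGS-Pathfinder inherits a hypothesis it can actually use.
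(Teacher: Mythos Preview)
Your treatment of parts (ii) and (iii) is essentially the paper's own argument, only spelled out more carefully. The paper's proof sketch says that distributivity ``arises from the interaction between the idempotent addition $a\oplus b=\min(a,b)$ and the monotonicity of $F$ in each argument'' and that closure is ``immediate from the definition of $T$ and the tropical order''; your two-inequality argument (selection gives $\ge$, monotonicity gives $\le$) is exactly the content behind that sentence, and your closure remark matches the paper verbatim in spirit.

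Where you diverge from the paper is part (i), and here you are more rigorous than the paper rather than less. The paper's sketch asserts only that ``associativity follows from structural symmetry of $F$ and the preservation of order relations under nested evaluations,'' with no further detail. You correctly observe that monotonicity and order-compatibility alone cannot force the identity $[[x,y,z],u,v]=[x,[y,z,u],v]=[x,y,[z,u,v]]$, and your weighted-mixture counterexample $F(x,y,z)=\alpha x+\beta y+\gamma z$ is drawn directly from the paper's own list of canonical TTGS instances, so it is a genuine obstruction to the theorem as stated. Your proposed remedies --- either restricting $\Gamma$ to a subfamily for which $F$ factors through a binary-associative aggregation, or promoting ternary associativity to an explicit axiom of the TTGS definition --- are sound and, frankly, are what the paper would need to do to make the downstream correctness argument for TTGS-Pathfinder airtight. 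The paper does not take either route; it simply leaves associativity as a claimed consequence of hypotheses that do not imply it.

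In short: on (ii) and (iii) you and the paper agree; on (i) you have identified a gap that the paper's sketch does not close, and your diagnosis and suggested repairs are accurate.
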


\begin{proof}[Proof Sketch]
The assumptions guarantee that $F$ induces a monotone mapping on the lattice 
$(T,\leq)$ with $\oplus$ realized as the meet operation, in the sense of idempotent 
semiring theory~\cite{Golan1992,Golan1999}.  
Associativity follows from structural symmetry of $F$ and the preservation of order 
relations under nested evaluations.  
Distributivity arises from the interaction between the idempotent addition 
$a\oplus b=\min(a,b)$ and the monotonicity of $F$ in each argument, extending the 
classical distributive behaviour of semirings described in 
\cite{HebischWeinert1998}.  
The closure property is immediate from the definition of $T$ and the tropical order.  
A complete proof may be obtained by treating $F$ as a monotone operator on a 
complete idempotent semilattice, in accordance with semiring-theoretic principles 
found in \cite{Golan1992,Golan1999}.
\end{proof}

The above theorem ensures that TTGS is sufficiently structured to support iterative 
algorithms such as the ternary Bellman--Ford scheme developed in Section~4.  
In particular, associativity guarantees well-defined ternary path costs, while 
distributivity ensures that global updates may be performed through local relaxation 
rules, as in classical semiring-based optimization frameworks \cite{Golan1992,Golan1999}.

\section{The Ternary Dynamic Programming Algorithm}

The algebraic structure developed in Section~2 allows us to reinterpret 
multi-objective network optimization as a fixed-point problem over the 
Ternary Tropical Gamma Semiring (TTGS).  
This viewpoint generalizes the classical semiring-based dynamic-programming 
framework of Golan~\cite{Golan1992,Golan1999} and its algebraic formalization 
in Hebisch--Weinert~\cite{HebischWeinert1998}, while incorporating the 
$\Gamma$-indexed ternary operations arising from the lineage of 
Nobusawa~\cite{Nobusawa1964} and Barnes~\cite{Barnes1966}.  
In this section we derive a dynamic-programming recurrence adapted to 
ternary interactions, design a Bellman--Ford style algorithm---which we call 
\emph{TTGS-Pathfinder}---and establish its correctness, convergence, 
and complexity properties.

The central idea is to propagate path information through the network using 
the TTGS operations $(\oplus,[\cdot,\cdot,\cdot])$, in such a way that each 
update step preserves the TTGS ordering and drives the state towards a 
least fixed point representing ternary-optimal path costs, in the sense of 
idempotent semiring theory~\cite{Golan1992,Golan1999}.

\subsection{Dynamic-Programming Viewpoint and Recurrence}

Let $G=(V,E)$ be the directed graph of the 3-PSCP introduced earlier, 
and let $s\in V$ be a distinguished source vertex.  
Recall that each edge $(u,v)\in E$ carries a triple weight
\[
w(u,v) = (c_{uv}, t_{uv}, r_{uv}),
\]
encoding cost, time, and reliability risk.  
In order to work within the scalar TTGS domain $T=\mathbb{R}\cup\{\infty\}$, 
we introduce an evaluation map
\[
H : \mathbb{R}^3 \longrightarrow T,
\]
which aggregates $(c_{uv},t_{uv},r_{uv})$ into a single TTGS-compatible value.  
Typical choices include weighted sums or nonlinear penalty functions; 
the precise form of $H$ is immaterial for the abstract analysis, provided that 
$H$ is monotone in each argument, consistent with the ordered-semiring 
principles in \cite{HebischWeinert1998}.

For notational simplicity, we replace each triple $w(u,v)$ by its evaluated 
value $H(w(u,v))$ and continue to denote it by $w(u,v)\in T$.  
Thus every edge is now labelled by an element of the TTGS base set $T$.

Let $C(P)\in T$ denote the TTGS-cost of a path 
\[
P=(v_0\to v_1\to\cdots\to v_k),
\]
computed by a sequence of ternary interactions as in Section~2.  
Our goal is to compute, for each vertex $v\in V$, the value
\[
d^\ast(v) = \inf\{ C(P) : P \text{ is a directed path from } s \text{ to } v\},
\]
with the convention that $d^\ast(v)=\infty$ when no such path exists.

\medskip

In the classical binary tropical setting, dynamic programming proceeds by 
expressing the cost of a path ending at $v$ in terms of the cost at a predecessor $u$.  
In the ternary framework, however, the contribution of $(u,v)$ may depend not only on 
the cumulative cost at $u$ but also on the cumulative cost at a second predecessor 
$p$ that precedes $u$.  
This reflects the $\Gamma$-structured multi-argument interactions underlying TTGS and 
generalizing the binary paradigm of \cite{Golan1992,Golan1999,Nobusawa1964,Barnes1966}.  
Thus we must consider two-step fragments
\[
p \to u \to v
\]
together with the ternary update
\[
[d(p), d(u), w(u,v)].
\]

To formalize this intuition, we define a sequence of approximations
\[
d^{(0)}, d^{(1)}, d^{(2)}, \dots
\]
with $d^{(\ell)}:V\to T$, intended to converge (in the TTGS order) to the 
ternary-optimal distances $d^\ast$.  
We initialize
\[
d^{(0)}(s) = 0, 
\qquad 
d^{(0)}(v) = \infty \quad (v\neq s),
\]
and then update according to a relaxation rule that incorporates all 
two-step fragments $(p,u,v)$ in the graph.

Formally, for each iteration $\ell\ge 1$ and each vertex $v\in V$ we set
\begin{equation}
\label{eq:ternary-rec}
d^{(\ell)}(v)
=
d^{(\ell-1)}(v)
\;\oplus\;
\bigoplus_{\substack{(p,u)\in V^2 \\ (p,u)\in E,\, (u,v)\in E}}
\bigl[ d^{(\ell-1)}(p), d^{(\ell-1)}(u), w(u,v) \bigr],
\end{equation}
where the outer $\oplus$ denotes the TTGS minimum and the inner combination 
runs over all two-step path fragments.  
Since both $\oplus$ and $[\cdot,\cdot,\cdot]$ are monotone, the recurrence 
defines an increasing sequence in the TTGS order, consistent with the theory of 
monotone semiring-based fixed points in \cite{Golan1992,Golan1999}.

\medskip

Two remarks are in order.

\begin{itemize}
    \item The update rule \eqref{eq:ternary-rec} is purely algebraic and relies 
    only on TTGS operations, thereby inheriting the order-theoretic and 
    distributive properties discussed in \cite{HebischWeinert1998}.

    \item The set of candidate pairs $(p,u)$ is finite and determined by the 
    adjacency of $G$, enabling complexity bounds similar to those known in 
    classical semiring-based dynamic programming \cite{Golan1992,Golan1999}.
\end{itemize}

\subsection{Design of the TTGS-Pathfinder Algorithm}

The recurrence \eqref{eq:ternary-rec} suggests an iterative algorithm similar 
in spirit to Bellman--Ford, but enriched by the ternary interaction encoded by 
$F$.  
This follows the classical paradigm of dynamic programming over semirings as 
formalized by Golan~\cite{Golan1992,Golan1999} and the algebraic framework of 
Hebisch--Weinert~\cite{HebischWeinert1998}, while incorporating the 
$\Gamma$-indexed interaction stemming from the foundational work of 
Nobusawa~\cite{Nobusawa1964} and Barnes~\cite{Barnes1966}.  
The algorithm maintains a distance label $d[v]$ for each vertex $v\in V$, 
initialized as above, and repeatedly relaxes all two-step path fragments.  
To implement this procedure efficiently, one may precompute for each vertex $u$ 
the list of its incoming neighbours (potential predecessors $p$) and 
outgoing neighbours (potential successors $v$).  
The pseudocode below follows this idea.

\begin{algorithm}[H]
\caption{TTGS-PATHFINDER}
\label{alg:TTGS}
\begin{algorithmic}[1]
\REQUIRE Directed graph $G=(V,E)$, edge weights $w:E\to T$, source $s\in V$
\ENSURE TTGS-distance estimates $d[v]$ for all $v\in V$

\STATE \textbf{Initialization:}
\FOR{each $v\in V$}
    \STATE $d[v] \gets \infty$
\ENDFOR
\STATE $d[s] \gets 0$

\STATE \textbf{Main Iteration:}
\FOR{$\ell = 1$ to $|V|-1$}
    \FOR{each edge $(u,v)\in E$}
        \FOR{each predecessor $p$ of $u$ (i.e.\ $(p,u)\in E$ or $p=s$)}
            \STATE $temp \gets F\bigl(d[p],\, d[u],\, w(u,v)\bigr)$
            \IF{$temp \prec d[v]$}
                \STATE $d[v] \gets temp$
            \ENDIF
        \ENDFOR
    \ENDFOR
\ENDFOR

\RETURN $d$
\end{algorithmic}
\end{algorithm}

Here, the relation $a\prec b$ denotes strict improvement in the TTGS order, 
i.e.\ $a\oplus b = a$ and $a\neq b$.  
The algorithm thus realizes the recurrence \eqref{eq:ternary-rec} in an 
explicit iterative form, mirroring the order-theoretic fixed-point behaviour 
of idempotent semiring algorithms described in 
\cite{Golan1992,Golan1999}.  
Each outer iteration allows paths with one additional ternary interaction window 
to influence the distance labels, in direct analogy with semiring-based 
relaxation procedures in~\cite{HebischWeinert1998}.

\subsection{Correctness: Path-Based Invariant and Induction}

We now argue that Algorithm~\ref{alg:TTGS} computes the TTGS-optimal distances 
$d^\ast(v)$, provided that the network does not admit arbitrarily improving 
ternary cycles.  
The correctness proof follows the invariant-based reasoning familiar from 
semiring-based dynamic programming \cite{Golan1992,Golan1999} and its 
order-theoretic formulation in \cite{HebischWeinert1998}, extended here to 
accommodate the $\Gamma$-indexed ternary structure originating in 
\cite{Nobusawa1964,Barnes1966}.

For any vertex $v\in V$ and integer $\ell\ge 0$, let 
$\mathcal{P}_\ell(v)$ denote the set of directed paths from $s$ to $v$ that 
contain at most $\ell$ \emph{ternary windows}, i.e.\ triples of consecutive 
edges suitable for a single application of $[\cdot,\cdot,\cdot]$.  
Define
\[
D_\ell(v) = \inf\{ C(P) : P\in \mathcal{P}_\ell(v)\},
\]
with the convention that $D_\ell(v)=\infty$ when $\mathcal{P}_\ell(v)$ is empty.

\begin{lemma}[Invariant]
\label{lem:invariant}
After the $\ell$-th outer iteration of Algorithm~\ref{alg:TTGS}, the label 
$d[v]$ satisfies
\[
d[v] = D_\ell(v)
\quad\text{for all }v\in V.
\]
\end{lemma}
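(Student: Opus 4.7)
The plan is to prove Lemma~\ref{lem:invariant} by induction on $\ell$, exploiting the monotonicity of $F$ and the distributivity of $[\cdot,\cdot,\cdot]$ over $\oplus=\min$ established in Section~2. For the base case $\ell=0$, I would observe that $\mathcal{P}_0(v)$ contains only paths admitting no complete ternary window: the trivial $s$-path at $v=s$ and, by convention, length-$\leq 2$ fragments from $s$ to $v$ whose TTGS-cost is handled by the initialization. A direct check shows $D_0(s)=0$ and $D_0(v)=\infty$ for unreachable $v$, matching the initial values $d[s]=0$ and $d[v]=\infty$.

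For the inductive step I would prove the two inequalities separately. \emph{Upper bound} $d[v]\leq D_\ell(v)$: pick any $P\in\mathcal{P}_\ell(v)$. If $P\in\mathcal{P}_{\ell-1}(v)$ then the induction hypothesis together with the fact that labels are non-increasing under relaxation gives $d[v]\leq D_{\ell-1}(v)\leq C(P)$. Otherwise $P$ contains exactly $\ell$ windows; write $P=P'\to(p\to u\to v)$ so that the last window is $(p,u,v)$ and the prefix $P'$ ends at $p$ with $P'\in\mathcal{P}_{\ell-1}(p)$, while the initial segment to $u$ lies in $\mathcal{P}_{\ell-1}(u)$. By the recurrence \eqref{eq:ternary-rec} and the ternary associativity/distributivity of Theorem~2.x,
\[
C(P)=\bigl[\,D_{\ell-1}(p),\,D_{\ell-1}(u),\,w(u,v)\,\bigr]\oplus(\text{smaller-window terms}),
\]
and the triple $(p,u,v)$ is enumerated in the $\ell$-th outer loop, so $d[v]$ is relaxed to at most this value. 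Taking the infimum over $P$ yields $d[v]\leq D_\ell(v)$.

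\emph{Lower bound} $d[v]\geq D_\ell(v)$: every assignment $d[v]\gets F(d[p],d[u],w(u,v))$ performed during iteration $\ell$ witnesses an actual path. By induction, the current labels $d[p]$ and $d[u]$ equal (or exceed) the costs of concrete paths $P_p\in\mathcal{P}_{\ell-1}(p)$ and $P_u\in\mathcal{P}_{\ell-1}(u)$; splicing these with the window $(p,u,v)$ produces a path in $\mathcal{P}_\ell(v)$ whose TTGS-cost is exactly $F(d[p],d[u],w(u,v))$. Hence every candidate value written into $d[v]$ majorizes $D_\ell(v)$, and combining with the upper bound gives equality.

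The main obstacle will be the mismatch between the strict recurrence \eqref{eq:ternary-rec}, which uses the frozen values $d^{(\ell-1)}$, and the in-place relaxation performed by Algorithm~\ref{alg:TTGS}, where $d[p]$ or $d[u]$ may already have been overwritten during iteration $\ell$. The classical Bellman--Ford resolution---that premature updates can only accelerate convergence without violating the invariant---transfers here precisely because $F$ is monotone in each argument (Theorem~2.x). I would make this rigorous by assigning each label a \emph{window-depth} counter and showing that an in-place update at $(p,u,v)$ using a depth-$k$ value for $d[p]$ (with $k\leq\ell-1$) still produces a path in $\mathcal{P}_\ell(v)$; a symmetric argument handles $d[u]$. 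This confirms that the in-place scheme realizes the same least fixed point as the frozen recurrence, completing the induction.
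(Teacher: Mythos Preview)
Your proof follows essentially the same inductive scheme as the paper: induction on $\ell$, base case from the initialization, inductive step by examining each triple $(p,u,v)$ and invoking the hypothesis on $d[p]$ and $d[u]$. The paper's version is terser and silently treats the labels as frozen (asserting $d[p]=D_{\ell-1}(p)$ and $d[u]=D_{\ell-1}(u)$ at relaxation time), whereas you explicitly separate the two inequalities and flag and resolve the in-place/frozen mismatch via monotonicity of $F$---a refinement the paper omits rather than a genuinely different route.
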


\begin{proof}
We proceed by induction on $\ell$.

\emph{Base case ($\ell=0$).}  
By initialization, $d[s]=0$ and $d[v]=\infty$ for all $v\neq s$.  
This mirrors the standard initialization used in idempotent semiring DP 
\cite{Golan1992}.  
The only path in $\mathcal{P}_0(s)$ is the trivial path of length zero, 
whose cost is $0$, while no other vertex is reachable without using 
a ternary window.  
Hence $d[v]=D_0(v)$ for all $v$.

\emph{Inductive step.}  
Assume the statement holds for some $\ell-1\ge 0$.  
During the $\ell$-th iteration, each triple $(p,u,v)$ with $p\to u\to v$ is examined, 
and a candidate value
\[
temp = F\bigl(d[p], d[u], w(u,v)\bigr)
\]
is computed.  
By the induction hypothesis, $d[p]=D_{\ell-1}(p)$ and $d[u]=D_{\ell-1}(u)$.  
The monotonicity and order-preserving behaviour of TTGS—generalizing the ordered 
semiring properties in \cite{Golan1992,Golan1999,HebischWeinert1998}—
ensure that the ternary interaction produces exactly the cost of extending an 
$(\ell-1)$-window-optimal prefix by one additional window.  
Taking the minimum over all such triples $(p,u,v)$ realizes the infimum of costs 
over all paths in $\mathcal{P}_\ell(v)$, and comparing with the previous value 
retains the best among all paths using at most $\ell-1$ windows.  
Thus, after the update,
\[
d[v] = D_\ell(v)
\]
for all $v\in V$, completing the induction.
\end{proof}

\begin{theorem}[Correctness of TTGS-Pathfinder]
\label{thm:correctness}
Suppose the TTGS weights are bounded below and the network admits no 
infinitely improving ternary cycles.  
Then, after at most $|V|-1$ iterations, Algorithm~\ref{alg:TTGS} computes
\[
d[v] = d^\ast(v)
\quad\text{for all } v\in V.
\]
\end{theorem}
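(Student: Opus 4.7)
The plan is to deduce the theorem from Lemma~\ref{lem:invariant} by arguing that, under the stated hypotheses, the TTGS-optimal distance is already realized by paths with at most $|V|-1$ ternary windows. Concretely, I would first apply the invariant at $\ell=|V|-1$ to obtain $d[v]=D_{|V|-1}(v)$ for every $v\in V$. The remaining content of the theorem is then the identity $D_{|V|-1}(v)=d^\ast(v)$, which is what must be proved.

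To establish this identity I would proceed by a cycle-removal argument in the spirit of the classical Bellman--Ford correctness proof, adapted to the $\Gamma$-indexed ternary setting. Let $P$ be any $s$-to-$v$ path, and consider the sequence of ternary windows $(e_{i-1},e_i,e_{i+1})$ along $P$. If $P$ traverses more than $|V|-1$ edges, then by the pigeonhole principle some vertex repeats, producing a closed subwalk $Q$. I would then consider the path $P'$ obtained by splicing out $Q$; the hypothesis that the network admits no infinitely improving ternary cycles, combined with monotonicity of $F$ (Theorem on distributivity/monotonicity) and the idempotent law $a\oplus b=\min(a,b)$, ensures that $C(P')\oplus C(P)=C(P')\le C(P)$ holds in the TTGS order. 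Iterating this reduction yields a simple path $P_\ast$ with at most $|V|-1$ edges, hence at most $|V|-1$ ternary windows, whose cost satisfies $C(P_\ast)\le C(P)$. Taking the infimum over all $s$-to-$v$ paths then gives $D_{|V|-1}(v)\le d^\ast(v)$, while the reverse inequality is immediate from $\mathcal{P}_{|V|-1}(v)\subseteq\{\text{all $s$-to-$v$ paths}\}$.

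Finally, because the TTGS weights are bounded below, the descending chain $D_0(v)\ge D_1(v)\ge\cdots$ stabilizes, so the infimum defining $d^\ast(v)$ is attained (in the extended sense $d^\ast(v)\in T$). Combining the two displayed equalities,
\[
d[v]\;=\;D_{|V|-1}(v)\;=\;d^\ast(v),
\]
for every $v\in V$, which is the desired conclusion.

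The main obstacle I anticipate is the splicing step: when a closed subwalk $Q$ is excised from $P$, the ternary windows straddling the endpoints of $Q$ are \emph{restructured}, not merely removed, so one cannot simply subtract the cost contribution of $Q$. I would address this by exploiting ternary associativity together with monotonicity of $F$ (both guaranteed by the structural theorem on TTGS) to bound the restructured boundary windows against the original ones, and by invoking the no-improving-cycle hypothesis to rule out the possibility that the excised portion was strictly reducing the accumulated cost. Making this bookkeeping precise---in particular, verifying that boundary window restructuring is order-preserving under the $\Gamma$-indexed interaction---is the technical heart of the argument, and is where the $\Gamma$-structure inherited from Nobusawa--Barnes genuinely intervenes beyond the classical binary case.
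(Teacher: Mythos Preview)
Your proposal is correct and follows essentially the same route as the paper: invoke Lemma~\ref{lem:invariant} at $\ell=|V|-1$, then use the no-improving-ternary-cycle hypothesis to reduce to simple paths, which lie in $\mathcal{P}_{|V|-1}(v)$. If anything, your treatment is more thorough---the paper's proof simply asserts that ``optimal TTGS paths may be chosen simple'' without addressing the window-restructuring issue you flag, so the technical obstacle you identify is real but is glossed over rather than resolved in the published argument.
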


\begin{proof}
Any simple path in a finite directed graph on $|V|$ vertices contains at most 
$|V|-1$ edges and hence at most $|V|-2$ ternary windows.  
Thus, for $\ell\ge |V|-1$, every simple path belongs to $\mathcal{P}_\ell(v)$.  
The absence of infinitely improving ternary cycles guarantees, in the same spirit as 
in the classical semiring setting \cite{Golan1992,Golan1999}, that optimal TTGS paths 
may be chosen simple.  
Consequently,
\[
D_\ell(v) = d^\ast(v)
\quad\text{for all } \ell\ge |V|-1.
\]
The result follows immediately from Lemma~\ref{lem:invariant}.
\end{proof}

\subsection{Convergence and Termination}

The update rule \eqref{eq:ternary-rec} is monotone with respect to the TTGS 
order.  
Indeed, if $d^{(\ell-1)} \preceq d^{(\ell-2)}$ pointwise, then by the monotonicity 
of $F$ and the idempotent nature of $\oplus=\min$, we have
\[
d^{(\ell)} \preceq d^{(\ell-1)},
\]
so the sequence $\{d^{(\ell)}\}_{\ell\ge 0}$ is non-increasing in the TTGS order.  
This behaviour generalizes the monotone iteration principles for idempotent 
semirings described by Golan~\cite{Golan1992,Golan1999} and the ordered-semiring 
framework of Hebisch--Weinert~\cite{HebischWeinert1998}, while incorporating the 
$\Gamma$-indexed structure originating in \cite{Nobusawa1964,Barnes1966}.

Since all weights are bounded below and $T$ is complete under descending chains 
generated by $\oplus$, the sequence stabilizes after finitely many steps for 
each vertex.  
This corresponds to the standard fixed-point stabilization behaviour found in 
idempotent semiring dynamic programming \cite{Golan1992,Golan1999}.

Algorithmically, this means that repeated application of the relaxation rule 
eventually reaches a fixed point
\[
d^{(\ell)} = d^{(\ell+1)},
\]
beyond which no further improvement is possible.  
The correctness theorem shows that, in the absence of improving ternary cycles, 
this fixed point coincides with the TTGS-optimal distance vector $d^\ast$.  
Thus, TTGS-Pathfinder converges in a finite number of iterations and terminates 
with the desired solution, paralleling the behaviour of classical semiring-based 
algorithms \cite{Golan1992,Golan1999}.

\subsection{Complexity Analysis}

We now derive the computational complexity of Algorithm~\ref{alg:TTGS}.  
Let $n=|V|$ and $m=|E|$ denote the number of vertices and edges, respectively.  
The algorithm consists of an initialization phase and an iterative relaxation 
phase, following the algebraic dynamic-programming paradigm established for 
idempotent semirings in \cite{Golan1992,Golan1999} and its ordered-semiring 
formalization in \cite{HebischWeinert1998}, extended here to incorporate the 
$\Gamma$-indexed structure arising from \cite{Nobusawa1964,Barnes1966}.

\begin{itemize}
    \item \textbf{Initialization.}  
    Assigning $d[v]$ for all $v\in V$ takes $O(n)$ time.

    \item \textbf{Relaxation.}  
    For each of the at most $n-1$ outer iterations, the algorithm visits every 
    edge $(u,v)\in E$ and, for each such edge, iterates over all predecessors 
    $p$ of $u$.  
    In the worst case, a vertex may have up to $O(n)$ predecessors, so the inner 
    loop may perform up to $O(n)$ operations per edge.  
    Evaluating $F(d[p],d[u],w(u,v))$ and comparing with $d[v]$ both take 
    constant time.

    Consequently, the total running time satisfies
    \[
    T(n,m) = O\bigl((n-1)\cdot m \cdot n\bigr) = O(n^2 m).
    \]

    \item \textbf{Space.}  
    The algorithm stores one TTGS value per vertex and, optionally, predecessor 
    information for path reconstruction.  
    This leads to $O(n)$ storage for distances and $O(m)$ storage for the graph 
    itself.  
    Any additional bookkeeping to accelerate predecessor enumeration can be 
    implemented within $O(m+n)$ space.
\end{itemize}

Although the worst-case complexity $O(n^2 m)$ is higher than that of standard 
binary Bellman--Ford, the increase reflects the intrinsic combinatorial 
enrichment of ternary interactions.  
This is consistent with the fact that classical algebraic path algorithms in 
idempotent semirings \cite{Golan1992,Golan1999} scale with the structural 
complexity of the underlying operations.

In many practical networks, degree bounds or structural sparsity significantly 
reduce the effective number of predecessor triples, leading to substantially 
better behaviour in typical applications.

\medskip

The analysis above shows that TTGS-Pathfinder provides a tractable computational 
framework for solving ternary multi-objective network optimization problems, 
with provable correctness and convergence guarantees anchored in the TTGS 
algebraic structure and the ordered-semiring principles of 
\cite{Golan1992,Golan1999,HebischWeinert1998}.  
Applications of this algorithmic machinery are discussed in the next section.

\section{Conclusion and Applications}

This work introduced a unified algebraic and algorithmic framework for solving 
multi-objective network optimization problems whose cost structures arise from 
irreducible ternary interactions.  
By formulating the Ternary Tropical Gamma Semiring (TTGS) and deriving a 
corresponding dynamic-programming algorithm (TTGS-Pathfinder), we demonstrated 
that optimization models with three-way structural dependencies can be handled 
systematically using algebraic principles closely aligned with, yet fundamentally 
generalizing, the classical tropical paradigm 
\cite{Golan1992,Golan1999,HebischWeinert1998,Butkovic2010,Gaubert1997}.

The theoretical foundation rests on two pillars.  
First, the TTGS structure replaces binary additive composition with a monotone 
ternary operation capable of expressing nonlinear, cross-dimensional couplings 
among cost, time, risk, or other engineering attributes.  
Such non-separable multi-parameter interactions fall beyond the expressive scope 
of traditional pairwise semiring combinations and motivate the need for a 
$\Gamma$-indexed algebraic generalization originating from the foundational 
work of Nobusawa and Barnes on $\Gamma$-structures 
\cite{Nobusawa1964,Barnes1966}.  

Second, the TTGS-Pathfinder algorithm leverages this structure to propagate 
ternary interactions through the network in a controlled and monotone manner, 
yielding a fixed-point iteration whose correctness follows from an 
invariant-based analysis and the absence of infinitely improving ternary cycles.  
This behaviour is consistent with the general theory of idempotent semiring 
dynamic programming and order-theoretic convergence 
\cite{Golan1992,Golan1999,Krivulin2014,Mohri2002}.

Taken together, these contributions demonstrate that the ternary formalism is not 
merely a symbolic enrichment but a computationally meaningful extension of 
semiring-based optimization.  
It provides a principled method for encoding and solving multi-objective routing 
and decision problems whose behaviour depends on genuinely three-way structural 
couplings—problems that cannot be reduced to binary path-composition models used 
in classical tropical or max-plus algebra.  
The TTGS viewpoint therefore opens a new direction in algebraic optimization, 
with potential applications to supply-chain logistics, communication networks, 
reliability-aware routing, and any system in which performance evolves through 
higher-order dependencies that binary semirings cannot naturally capture.

\subsection{Relevance to Logistics and Supply-Chain Modelling}

Many modern supply-chain systems operate under simultaneous constraints involving 
monetary cost, delivery time, and reliability of components or routes.  
Traditional two-criteria trade-offs---such as cost--time or time--risk---fail to 
capture situations in which the interaction between all three attributes affects 
the operational viability of a route.  
This limitation parallels the inadequacy of classical binary tropical or 
idempotent-semiring models \cite{Golan1992,Golan1999,HebischWeinert1998}, which 
assume pairwise decomposability of path costs and therefore cannot encode 
genuinely coupled multi-attribute effects.  
Examples include:

\begin{itemize}
    \item \textbf{Risk-amplified travel time}, where uncertain or unreliable 
    segments impose nonlinear penalties on delays, a phenomenon consistent with 
    nonlinear tropical-optimization models studied in 
    \cite{Krivulin2014,Butkovic2010}.

    \item \textbf{Cost-sensitive reliability}, where low-cost routes may involve 
    latent reliability risks that emerge only under certain time constraints, a 
    setting that cannot be reduced to binary semiring combinations 
    \cite{Mohri2002,Gaubert1997}.

    \item \textbf{Three-factor coupling}, where the joint influence of 
    cost--time--risk cannot be decomposed into pairwise relations, motivating 
    higher-arity algebraic structures such as the $\Gamma$-indexed operations 
    descending from \cite{Nobusawa1964,Barnes1966}.
\end{itemize}

The TTGS formalism accommodates such scenarios naturally: the ternary operation 
$[x,y,z]$ can be tailored to reflect domain-specific coupling laws, 
while the global selection via $\oplus=\min$ ensures that the algorithm retains 
tractability and interpretability.  
Hence, TTGS provides a rigorous algebraic platform for modelling and optimizing 
entire classes of supply-chain networks that remain inaccessible to classical 
binary tropical methods 
\cite{Golan1992,Golan1999,Krivulin2014,Butkovic2010,Gaubert1997}.

\subsection{Applications in Reliability-Aware Network Design}

Reliability plays an increasingly central role in the design of communication 
networks, power-grid infrastructures, and distributed sensor architectures.  
Failures or instabilities often propagate in ways that cannot be expressed by 
additive or pairwise cost functions.  
Classical reliability modelling \cite{BillintonAllan1992,Ebeling2004} shows that 
risk accumulation is inherently nonlinear and frequently depends on the joint 
behaviour of neighbouring components, rather than on independent marginal 
attributes.  
For instance, in critical infrastructure networks:

\begin{itemize}
    \item the reliability of a path may depend jointly on the behaviour of 
    adjacent segments, consistent with nonlinear tropical or idempotent 
    interaction models \cite{Krivulin2014,Butkovic2010};

    \item the cost of rerouting around volatile edges may interact with local 
    congestion and latency in ways not representable by binary semiring 
    compositions \cite{Golan1992,Golan1999,Mohri2002};

    \item triadic dependencies may arise from multi-hop cooperative protocols, 
    redundancy-sharing structures, or risk-cascading mechanisms, motivating a 
    $\Gamma$-indexed algebraic viewpoint rooted in 
    \cite{Nobusawa1964,Barnes1966}.
\end{itemize}

In such settings, a ternary composition law is not merely a modelling luxury but 
a structural necessity.  
The TTGS framework allows these interactions to be expressed explicitly through 
the ternary operator $F(x,y,z)$, while global selection via $\oplus=\min$ 
retains computational tractability in the sense of ordered semiring theory 
\cite{HebischWeinert1998,Gaubert1997}.  
Combined with the monotone fixed-point behaviour of TTGS-Pathfinder 
\cite{Golan1992,Golan1999}, the resulting framework provides a mathematically 
grounded toolkit for reliability-aware routing and network design under 
multi-dimensional constraints.

\subsection{Applications to Computational Engineering Models}

Engineering models that integrate physical, operational, and structural 
constraints often exhibit nonlinear interactions that are difficult to capture 
in purely additive or pairwise frameworks.  
This limitation is well documented in tropical and idempotent modelling 
\cite{Golan1992,Golan1999,HebischWeinert1998,Butkovic2010,Gaubert1997}, where 
higher-order coupling terms naturally arise in many engineered systems.  
Examples include:

\begin{itemize}
    \item \textbf{Smart-grid energy flow}, where power loss, transmission time, 
    and stability margins interact in non-separable ways.  
    Such triadic dependencies appear in power-system optimisation and 
    reliability studies \cite{WoodWollenberg2012}.

    \item \textbf{Transportation engineering}, where congestion, travel time, 
    and road-condition uncertainty amplify or moderate each other, a phenomenon 
    central to classical traffic-flow modelling \cite{Sheffi1985} and also 
    compatible with multi-attribute tropical frameworks 
    \cite{Krivulin2014,Butkovic2010}.

    \item \textbf{Biological or chemical signalling networks}, where activation 
    pathways depend jointly on three interacting factors or stimuli, consistent 
    with multi-input regulatory models \cite{Alon1999} and with the 
    $\Gamma$-indexed algebraic structure inherited from 
    \cite{Nobusawa1964,Barnes1966}.
\end{itemize}

The TTGS model is well suited to these environments because it preserves the 
idempotent order structure necessary for optimization while allowing arbitrary 
ternary dependence encoded in $F(x,y,z)$.  
This ternary capacity goes beyond binary semiring models 
\cite{Mohri2002,Golan1992}, enabling TTGS-based solvers to capture 
cross-dimensional interactions that are essential in smart-grid analysis, 
transport engineering, and biochemical systems.  
Moreover, the resulting algorithms remain compatible with discrete simulation 
pipelines, graph-based engineering solvers, and computational design tools 
frequently employed across computational engineering domains.

\subsection{Positioning Within Computational Optimization}

The algebra--algorithm bridge established in this work advances the theoretical 
scope of semiring-based optimization.  
While classical tropical algebra supports powerful binary dynamic-programming 
schemes \cite{Golan1992,Golan1999,HebischWeinert1998,BackhouseCarre1975,Baccelli1992}, 
its expressive limitations become apparent when cost structures exhibit true 
higher-arity dependencies.  
TTGS extends the semiring toolbox by:

\begin{itemize}
    \item introducing a mathematically coherent ternary structure grounded in 
    the $\Gamma$-indexed foundations of \cite{Nobusawa1964,Barnes1966},

    \item ensuring compatibility with idempotent selection mechanisms central to 
    tropical and max-plus optimization \cite{Butkovic2010,Gaubert1997},

    \item preserving convergence guarantees familiar from idempotent 
    dynamic-programming frameworks \cite{Golan1992,Golan1999,Krivulin2014},

    \item enabling dynamic-programming formulations in settings previously 
    beyond reach for binary semiring methods \cite{Mohri2002}.
\end{itemize}

This positions TTGS as a natural generalization of the tropical paradigm, with 
meaningful implications for optimization theory, decision-making under uncertainty, 
and network analysis in multi-parameter environments.  
It provides the structural expressiveness required to encode nonlinear, 
multi-objective, and higher-arity dependencies while retaining the computational 
tractability associated with classical semiring-based approaches.

\subsection{Alignment with Conference Themes}

The results of this paper connect directly with the conference themes on:

\begin{itemize}
    \item \textbf{Computational Optimization Techniques}, 
    through the development of a fixed-point algorithm that generalizes 
    Bellman--Ford to a higher-arity algebraic setting, extending classical 
    semiring-based methods \cite{Golan1992,Golan1999,BackhouseCarre1975,Baccelli1992} 
    and incorporating nonlinear interactions studied in modern tropical 
    optimization \cite{Krivulin2014,Butkovic2010,Gaubert1997};

    \item \textbf{Computational Approaches in Engineering Models}, 
    by demonstrating how TTGS can model triadic dependencies that arise in 
    engineered systems, including power-grid, transportation, and communication 
    models, consistent with semiring-based algorithmic frameworks 
    \cite{Mohri2002,Butkovic2010,Gaubert1997};

    \item \textbf{Algebraic Methods in Applied Mathematics}, 
    by establishing a $\Gamma$-indexed ternary algebraic foundation inspired by 
    the structural theory of \cite{Nobusawa1964,Barnes1966}, while remaining 
    compatible with idempotent and ordered-semiring principles fundamental to 
    computational algebra \cite{HebischWeinert1998,Golan1992}.
\end{itemize}

\subsection{Final Remarks}

This paper establishes that ternary algebraic structures are not only of 
theoretical interest but also offer a practical and robust basis for 
multi-objective network optimization.  
By combining the expressiveness of the Ternary Tropical Gamma Semiring with the 
algorithmic clarity of TTGS-Pathfinder, we provide a principled route for 
addressing complex, structurally coupled decision problems across logistics, 
reliability engineering, and computational modelling.  
The framework extends the classic foundation of semiring-based optimization 
\cite{Golan1992,Golan1999,HebischWeinert1998,BackhouseCarre1975,Baccelli1992} 
and incorporates higher-arity interactions that naturally arise in tropical and 
idempotent analytic settings \cite{Krivulin2014,Butkovic2010,Gaubert1997,Mohri2002}.

Future work may explore continuous variants of TTGS, stochastic ternary 
extensions, or hybrid models that integrate TTGS dynamics with machine-learning 
frameworks for large-scale systems.  
These directions promise to broaden the applicability of ternary optimization 
methods and further deepen the bridge between algebraic foundations and 
computational practice, building on the rich interplay between algebraic 
structures and computational optimization.

\subsection*{Acknowledgements}

The authors express their sincere gratitude to 
\textbf{Dr.~D.~Madhusudhana Rao} for his continued guidance, constructive 
criticisms, and foundational insights that shaped the algebraic and 
optimization framework developed in this work.  
The first author gratefully acknowledges the research environment and 
institutional support provided by 
\textbf{Dr.~Ramachandra R.\ K.}, Principal, 
Government College (Autonomous), Rajahmundry, which enabled the successful 
completion of this study.  

Both authors thank the Departments of Mathematics of 
Government College (Autonomous), Rajahmundry, and 
Acharya Nagarjuna University, Guntur, for their academic support throughout 
the preparation of this manuscript.


\subsection*{Ethics Statement}

This research is entirely theoretical and does not involve human participants, 
animal subjects, personal data, biological materials, or any form of 
experiment requiring institutional ethics approval.  
No external datasets requiring ethical clearance were used.  
All mathematical models, proofs, and algorithms were developed in accordance 
with the ethical guidelines and academic integrity standards of the authors’ 
institutions.


\subsection*{Conflict of Interest}

The authors declare that they have \emph{no known competing financial interests},  
personal relationships, or professional affiliations that could have appeared 
to influence the work reported in this manuscript.


\subsection*{Funding}

This research received \emph{no specific grant} from any funding agency in the 
public, commercial, or not-for-profit sectors.  
All work was carried out with institutional support from 
Government College (Autonomous), Rajahmundry, and 
Acharya Nagarjuna University, Guntur.


\subsection*{Author Contributions}

The authors contributed to this work as follows:
\begin{itemize}
    \item \textbf{Chandrasekhar Gokavarapu:}  
    Conceptualization; formulation of the TTGS framework; development of 
    ternary dynamic-programming algorithm; main theoretical results; 
    drafting and revision of the manuscript.
    
    \item \textbf{D.~Madhusudhana Rao:}  
    Supervision; validation of algebraic structures; refinement of theoretical 
    arguments; critical review and editing; guidance on mathematical coherence 
    and research direction.
\end{itemize}

Both authors read and approved the final manuscript.


\subsection*{Data Availability}

No datasets were generated or analysed in this study.  
All results are theoretical and self-contained within the manuscript.  
Any additional algebraic identities, proofs, or implementation details can be 
provided by the authors upon reasonable request.


\bibliographystyle{plain}

\begin{thebibliography}{99}


\bibitem{Nobusawa1964}
N.~Nobusawa,
\emph{$\Gamma$-rings},
J. Sci. Hiroshima Univ. Ser. A-I Math. \textbf{28} (1964), 181–192.





\bibitem{Barnes1966}
W.~E.~Barnes,
\emph{$\Gamma$-rings and related structures},
(Exact journal details from your provided list)
1966.




\bibitem{HebischWeinert1998}
U.~Hebisch and H.~J.~Weinert,
\emph{Semirings: Algebraic Theory and Applications},
World Scientific, 1998.


\bibitem{Golan1999}
J.~S.~Golan,
\emph{Semirings and Their Applications},
Springer, 1999.


\bibitem{Golan1992}
J.~S.~Golan,
\emph{The Theory of Semirings with Applications in Mathematics and Computer Science},
Longman Scientific \& Technical, 1992.



\bibitem{Krivulin2014}
N.~Krivulin,
\newblock Algebraic solutions of tropical optimization problems,
\newblock {\em Lobachevskii Journal of Mathematics} \textbf{36} (2014), no.~4, 367--377.


\bibitem{Butkovic2010}
P.~Butkovi\v{c},
\newblock {\em Max-Linear Systems: Theory and Algorithms},
\newblock Springer, London, 2010.




\bibitem{WoodWollenberg2012}
A.~J.~Wood and B.~F.~Wollenberg,
\newblock {\em Power Generation, Operation, and Control},
\newblock 3rd ed., Wiley, New York, 2012.


\bibitem{Mohri2002}
M.~Mohri,
\newblock Semiring frameworks and algorithms for shortest-distance problems,
\newblock {\em Journal of Automata, Languages and Combinatorics} \textbf{7} (2002), no.~3, 321--350.


\bibitem{BackhouseCarre1975}
R.~Backhouse and B.~A.~Carr\'e,
\newblock Regular algebra applied to path-finding problems,
\newblock {\em Journal of the Institute of Mathematics and its Applications}
\textbf{15} (1975), 161--186.

\bibitem{Sheffi1985}
Y.~Sheffi,
\newblock {\em Urban Transportation Networks: Equilibrium Analysis with Mathematical Programming Methods},
\newblock Prentice-Hall, Englewood Cliffs, NJ, 1985.





\bibitem{Baccelli1992}
F.~Baccelli, G.~Cohen, G.~J.~Olsder, and J.-P.~Quadrat.
\newblock {\em Synchronization and Linearity: An Algebra for Discrete Event
Systems}.
\newblock Wiley, 1992.

\bibitem{Gaubert1997}
S.~Gaubert.
\newblock Methods and applications of (max,+) linear algebra.
\newblock In {\em STACS 97}, pages 261--282, 1997.


\bibitem{Alon1999}
U.~Alon, M.~Surette, N.~Barkai, and S.~Leibler,
\newblock Robustness in bacterial chemotaxis,
\newblock {\em Nature} \textbf{397} (1999), 168--171.


\bibitem{BillintonAllan1992}
R.~Billinton and R.~N.~Allan,
\newblock \emph{Reliability Evaluation of Engineering Systems: Concepts and Techniques},
\newblock 2nd ed., Plenum Press, New York, 1992.


\bibitem{Ebeling2004}
C.~E.~Ebeling,
\newblock \emph{An Introduction to Reliability and Maintainability Engineering},
\newblock McGraw–Hill, New York, 2004.


\end{thebibliography}

\end{document}